\newtheorem{theorem}{Theorem}
\newtheorem{lemma}{Lemma}
\newtheorem{proposition}{Proposition}
\newtheorem{definition}{Definition}
\theoremstyle{definition}
\newtheorem{remark}{Remark}
\newcommand{\naturals}{\ensuremath{\mathbb{N}}}
\newcommand{\Reals}{\ensuremath{\mathbb{R}}}
\newcommand{\expectation}{\ensuremath{\mathbb{E}}}
\newcommand{\set}{\ensuremath{\mathcal}}
\newcommand{\dif}{\mathrm{d}}
\newcommand{\TV}{d_{\mathrm{TV}}}
\begin{document}
\thispagestyle{empty}
\setcounter{page}{1}
\setlength{\baselineskip}{1.15\baselineskip}

\title{\huge{Lower Bounds for the Total Variation Distance Given Means and Variances of Distributions}\\[0.2cm]}
\author{Tomohiro Nishiyama\\ Email: htam0ybboh@gmail.com}
\date{}
\maketitle
\thispagestyle{empty}

\begin{abstract}
For arbitrary two probability measures on real d-space with given means and variances (covariance matrices), we provide lower bounds for their total variation distance. In the one-dimensional case, a tight bound is given.
\end{abstract}
 
\section{Introduction}
The $f$-divergence~\cite{sason2016f} is an important class of divergence measures, defined by means of convex functions $f$, includes many important divergences such as the total variation (TV) distance and the Kullback-Leibler (KL) divergence~\cite{kullback1951information}. Given means and variances of two probability measures, closed-form lower bounds for the $f$-divergence are useful in practice because they can be directly evaluated based on only moments of distributions. These bounds are now beginning to be applied to nonequilibrium physics~\cite{hasegawa2021irreversibility, van2020unified, falasco2022beyond}.

For probability measures on real line, the tight lower bound on the $\chi^2$-divergence is known as the 
Hammersley–Chapman–Robbins bound~\cite{chapman1951minimum, hammersley1950estimating}. In our previous works, we provided tight lower bounds for the KL-divergence and the squared Hellinger distance~\cite{nishiyama2020relations,nishiyama2020tight}. We generalized these bounds for the asymmetric $\alpha$-divergence~\cite{cichocki2010families} with $\alpha\in[-1,2]$~\cite{nishiyama2021tight}, which include the above three divergences. We also provided a sufficient condition that an arbitrary symmetric $f$-divergence has a closed-form expression for lower bound~\cite{nishiyama2022relations}.
Although the TV distance is a symmetric $f$-divergence, it does not satisfy this sufficient condition. On the other hand, for the TV distance between single Gaussians or Gaussian mixtures on real $d$-space, the closed-form expressions for lower bounds have recently been derived~\cite{davies2022lower, devroye2018total}. 

In this work, we study closed-form lower bounds for the TV distance between arbitrary probability measures on real $d$-space with given means and variances (covariance matrices). In the one-dimensional case, we provide a tight lower bound.

\section{Main Results}
\subsection{Preliminaries}
We provide definitions which are used in this paper.

\begin{definition}
Let $P$ and $Q$ be probability measures defined on a common measurable space $(\mathcal{A}, \mathscr{F})$. Let $\mu$ be a dominating measure
of $P$ and $Q$ (i.e., $P, Q \ll \mu$), and let $p := \frac{\mathrm{d}P}{\mathrm{d}\mu}$
and $q := \frac{\mathrm{d}Q}{\mathrm{d}\mu}$ be the densities of $P$ and $Q$ with respect
to $\mu$. The {\em total variation (TV) distance} between $P$ and $Q$ is given by
\begin{align} 
\TV(P,Q) := \frac12 \int |p-q|\dif \mu = \sup_{\mathcal{F}\in \mathscr{F}} |P(\mathcal{F})-Q(\mathcal{F})|. \nonumber
\end{align}
\end{definition}

\begin{definition} 
Let $P$ and $Q$ be probability measures on $\Reals$. Let $m_P, m_Q, \sigma_P^2$, and $\sigma_Q^2$ denote the means and the variances of $X\sim P$ and $Y\sim Q$, i.e.,
\begin{align}
&m_P:=\expectation[X], \; m_Q:=\expectation[Y], \nonumber  \\    
&\sigma_P^2:=\expectation[(X-m_P)^2], \; \sigma_Q^2:=\expectation[(Y-m_Q)^2]. \nonumber
\end{align}
A set of pairs of probability measures $(P,Q)$ with given means $(m_P, m_Q)$ and variances $(\sigma_P^2, \sigma_Q^2)$ is defined as $\set{P}[m_P, \sigma_P; m_Q, \sigma_Q]$. 
Similarly, a set of pairs of probability measures $(P,Q)$ on $\Reals^d$ with given means $(\bm{m}_P, \bm{m}_Q)$ and covariance matrices $(\Sigma_P, \Sigma_Q)$ is defined as $\set{P}[\bm{m}_P, \Sigma_P; \bm{m}_Q, \Sigma_Q]$, where $\Sigma_P:=\expectation[(\bm{X}-\bm{m}_P)(\bm{X}-\bm{m}_P)^T]$ and $\Sigma_Q:=\expectation[(\bm{Y}-\bm{m}_Q)(\bm{Y}-\bm{m}_Q)^T]$.
\end{definition}

\subsection{Lower bounds for the TV distance}
Our main result is the following tight lower bound between a pair of arbitrary probability measures on real line with given means and variances.
\begin{theorem} \label{th_1}
Let $(P, Q)\in \set{P}[m_P, \sigma_P; m_Q, \sigma_Q]$. 
\begin{enumerate}[(a)]
\item
If $m_P\neq m_Q$, then
\begin{align}
\label{th_1_1}
\TV(P,Q)\geq \frac{a^2}{(\sigma_P+\sigma_Q)^2+a^2}, 
\end{align}
where $a:=m_P-m_Q$. 
\item 
The lower bound on the right side of \eqref{th_1_1} is attained by a pair of two or three-element probability distributions as follows: 
\begin{align}
&P=(1-p\;, p, \;0), \quad Q=(1-p,\; 0, \;p),  \quad \mbox{if} \hspace*{0.15cm}  \sigma_P>0  \hspace*{0.15cm} \mbox{and} \hspace*{0.15cm} \sigma_Q>0, \nonumber \\
&P=(1-p\;, p), \quad Q=(1,\; 0),  \quad \mbox{if} \hspace*{0.15cm}  \sigma_P>0 \hspace*{0.15cm} \mbox{and} \hspace*{0.15cm} \sigma_Q=0, \nonumber \\
&P=(1\;, 0), \quad Q=(1-p,\; p),  \quad \mbox{otherwise} , \nonumber 
\end{align}
where 
\begin{align}
p:=\frac{a^2}{(\sigma_P+\sigma_Q)^2+a^2}\in(0,1]. \nonumber
\end{align}
\item
If $m_P=m_Q$, then
\begin{align}
\label{th_1_3}
\inf\TV(P,Q)=0.
\end{align}
\end{enumerate}
The infimum in \eqref{th_1_3} are taken over all $(P,Q)\in \set{P}[m_P, \sigma_P; m_Q, \sigma_Q]$.
\end{theorem}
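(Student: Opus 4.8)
\emph{Proof sketch.} The plan is to establish part (a) through the coupling (transportation) characterization of the TV distance, namely $\TV(P,Q)=\inf_{\pi}\prob_\pi(X\neq Y)$, where the infimum runs over all couplings $\pi$ of $P$ and $Q$ (a standard fact on the Polish space $\Reals$). Fix any such coupling, realizing $X\sim P$ and $Y\sim Q$ on a common probability space. Since $X-Y$ vanishes on the event $\{X=Y\}$, I would write $a=\expectation[X-Y]=\expectation[(X-Y)\mathbf{1}\{X\neq Y\}]$ and apply the Cauchy--Schwarz inequality to obtain $a^2\le \expectation[(X-Y)^2]\,\prob_\pi(X\neq Y)$. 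Rearranging and taking the infimum over couplings then yields the claimed bound, provided $\expectation[(X-Y)^2]$ can be controlled by the given moments uniformly in $\pi$.

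The crucial estimate is the uniform second-moment bound $\expectation[(X-Y)^2]\le(\sigma_P+\sigma_Q)^2+a^2$. To get it I would split $\expectation[(X-Y)^2]=\Var(X-Y)+a^2$ and bound the variance by Minkowski's inequality after centering: $\sqrt{\Var(X-Y)}=\|(X-m_P)-(Y-m_Q)\|_2\le\|X-m_P\|_2+\|Y-m_Q\|_2=\sigma_P+\sigma_Q$. The subtle point --- and the main obstacle --- is precisely this centering: applying the triangle inequality directly to $\|X-Y\|_2$ would produce the looser constant $(\sigma_P+\sigma_Q+|a|)^2$, which does not match the extremal distributions; one must separate out the deterministic mean gap $a$ first. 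Combining this with the Cauchy--Schwarz step gives $a^2\le[(\sigma_P+\sigma_Q)^2+a^2]\,\prob_\pi(X\neq Y)$ for every coupling, hence $\TV(P,Q)\ge a^2/[(\sigma_P+\sigma_Q)^2+a^2]$.

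For part (b) I would simply verify that the listed discrete laws lie in the class and meet the bound with equality. In the generic case $\sigma_P,\sigma_Q>0$, place a shared atom at a point $c$ with common mass $1-p$ and let $P,Q$ carry their remaining mass $p$ at distinct points $u>c>v$; a direct computation gives $\sigma_P=\sqrt{p(1-p)}\,(u-c)$, $\sigma_Q=\sqrt{p(1-p)}\,(c-v)$, and $a=p(u-v)$, whence $(\sigma_P+\sigma_Q)^2+a^2=a^2/p$, so that $p=a^2/[(\sigma_P+\sigma_Q)^2+a^2]$ and $\TV(P,Q)=p$. Choosing $u,v,c$ to match prescribed $\sigma_P,\sigma_Q,a$ is then a matter of solving these relations; the two boundary cases $\sigma_P=0$ or $\sigma_Q=0$ collapse one atom and are handled identically.

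For part (c), with $a=0$ I would exhibit a family $(P_\epsilon,Q_\epsilon)$ in the class whose TV distance tends to $0$. Writing $m:=m_P=m_Q$, take $P_\epsilon=(1-\epsilon)\delta_m+\epsilon S_P$ and $Q_\epsilon=(1-\epsilon)\delta_m+\epsilon S_Q$, where $S_P$ (resp.\ $S_Q$) is a symmetric two-atom law centered at $m$ with variance $\sigma_P^2/\epsilon$ (resp.\ $\sigma_Q^2/\epsilon$). Each pair has the prescribed means $m$ and variances $\sigma_P^2,\sigma_Q^2$, while $P_\epsilon-Q_\epsilon=\epsilon(S_P-S_Q)$ forces $\TV(P_\epsilon,Q_\epsilon)\le\epsilon\to0$, establishing $\inf\TV(P,Q)=0$.
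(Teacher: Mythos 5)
Your proposal is correct, but it takes a genuinely different and considerably more elementary route than the paper. For part (a) the paper argues variationally: it restricts to finitely supported pairs in a box $[-R,R]$, invokes compactness, checks linear independence of the moment-constraint gradients (Lemma~\ref{lem_constraints}), classifies stationary points of a Lagrangian whose multipliers form quadratic polynomials $\phi,\psi$ (Lemma~\ref{minimum_condition} and Lemma~\ref{lem_lagrange}), computes the TV distance of each candidate configuration (Lemmas~\ref{lem_pstar} and~\ref{lem_p2} and the appendices), rules out mass escaping to infinity as $R\to\infty$, and finally approximates arbitrary measures by discrete ones. You replace all of this with the maximal-coupling identity $\TV(P,Q)=\min_\pi \prob_\pi(X\neq Y)$ combined with Cauchy--Schwarz and the centered Minkowski bound $\Var(X-Y)\le(\sigma_P+\sigma_Q)^2$, which holds uniformly over couplings; this yields \eqref{th_1_1} directly for arbitrary probability measures on $\Reals$, with no discretization, no boundary analysis, and no approximation step. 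Your emphasis on centering before applying the triangle inequality is exactly the right point: it is what makes the constant match the extremal configuration. Your part (b) is a direct verification that the paper's $\set{P}_3^*$ family (and its degenerate two-point versions) attains the bound, essentially the computation of Lemma~\ref{lem_pstar} run in reverse, and your part (c) sequence is the same idea as the paper's (vanishing mass pushed to infinity), written more symmetrically. What the paper's longer route buys is a classification of minimizers --- e.g., that two-point supports are strictly suboptimal when $\sigma_P,\sigma_Q>0$ (the Remark after Theorem~\ref{th_1}) --- which your method does not provide, but which the theorem as stated does not require. A pleasant bonus of your method is that, applied coordinatewise in $\Reals^d$, it also recovers (indeed slightly sharpens) Proposition~\ref{prop_1}. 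In a final write-up you should only tighten two routine points: the sign convention when $a<0$ (swap the roles of $u$ and $v$), and the explicit solvability of $u,v,c$ from the prescribed moments.
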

\begin{proof}
See Section~\ref{section: proofs}.
\end{proof}
\begin{remark}
For KL-divergence, $\chi^2$-divergence and squared Hellinger distance, a pair of probability measures defined on a common two-point set attains lower bounds with given means and variances. However, this does not hold for the TV distance when $\sigma_P$ and $\sigma_Q$ are positive.
\end{remark}
We next provide a lower bound for probability measures on $\Reals^d$.
\begin{proposition}
\label{prop_1}
Let $P$ and $Q$ be probability measures on $\Reals^d$, and let $(P,Q)\in\set{P}[\bm{m}_P, \Sigma_P; \bm{m}_Q, \Sigma_Q]$. Then, 
\begin{align}
\label{prop_1_1}
\TV(P,Q)\geq \frac{\bm{a}^T \bm{a}}{2(\mathrm{tr}(\Sigma_P)+\mathrm{tr}(\Sigma_Q))+\bm{a}^T \bm{a}},
\end{align}
where $\mathrm{tr}(A)$ denotes the trace of a matrix $A$, and $\bm{a}:=\bm{m}_P-\bm{m}_Q$.
\end{proposition}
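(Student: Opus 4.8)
The plan is to reduce the $d$-dimensional problem to the one-dimensional bound of Theorem~\ref{th_1} by projecting both measures onto a single well-chosen direction and invoking the data-processing inequality for the TV distance. If $\bm{a}=\bm{0}$, the right-hand side of \eqref{prop_1_1} is zero and the claim is trivial, so assume $\bm{a}\neq\bm{0}$ and set $\bm{u}:=\bm{a}/\sqrt{\bm{a}^T\bm{a}}$, a unit vector.

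First I would push both measures forward through the linear map $\bm{x}\mapsto\bm{u}^T\bm{x}$, obtaining one-dimensional laws $P_{\bm{u}}$ and $Q_{\bm{u}}$ on $\Reals$. Since the TV distance is an $f$-divergence it satisfies the data-processing inequality; equivalently, this follows directly from the supremum-over-events characterization in Definition~1, because every Borel set $B\subseteq\Reals$ pulls back to a measurable set $\{\bm{x}:\bm{u}^T\bm{x}\in B\}$ in $\Reals^d$. Hence $\TV(P,Q)\geq\TV(P_{\bm{u}},Q_{\bm{u}})$. The projected means and variances follow by linearity: $m_{P_{\bm{u}}}-m_{Q_{\bm{u}}}=\bm{u}^T\bm{a}=\sqrt{\bm{a}^T\bm{a}}$, while $\sigma_{P_{\bm{u}}}^2=\bm{u}^T\Sigma_P\bm{u}$ and $\sigma_{Q_{\bm{u}}}^2=\bm{u}^T\Sigma_Q\bm{u}$. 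In particular the projected means differ, so Theorem~\ref{th_1}(a) applies and yields
\begin{align}
\TV(P_{\bm{u}},Q_{\bm{u}})\geq\frac{\bm{a}^T\bm{a}}{(\sigma_{P_{\bm{u}}}+\sigma_{Q_{\bm{u}}})^2+\bm{a}^T\bm{a}}. \nonumber
\end{align}

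It remains to replace the projected variances by the traces. The two elementary estimates I would use are $(\sigma_{P_{\bm{u}}}+\sigma_{Q_{\bm{u}}})^2\leq 2(\sigma_{P_{\bm{u}}}^2+\sigma_{Q_{\bm{u}}}^2)$ and, since each covariance matrix is positive semidefinite while $\bm{u}$ is a unit vector, $\bm{u}^T\Sigma_P\bm{u}\leq\mathrm{tr}(\Sigma_P)$ and $\bm{u}^T\Sigma_Q\bm{u}\leq\mathrm{tr}(\Sigma_Q)$ (the Rayleigh quotient is bounded by the largest eigenvalue, hence by the trace). Combining gives $(\sigma_{P_{\bm{u}}}+\sigma_{Q_{\bm{u}}})^2\leq 2(\mathrm{tr}(\Sigma_P)+\mathrm{tr}(\Sigma_Q))$. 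Because $t\mapsto \frac{c}{t+c}$ with $c:=\bm{a}^T\bm{a}>0$ is decreasing in $t$, enlarging the denominator only decreases the fraction, so the displayed bound is at least $\frac{\bm{a}^T\bm{a}}{2(\mathrm{tr}(\Sigma_P)+\mathrm{tr}(\Sigma_Q))+\bm{a}^T\bm{a}}$, which is \eqref{prop_1_1}.

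I do not expect a serious obstacle: the reduction is clean and each inequality is standard. The one point demanding care is the direction of monotonicity in the final step—one must verify that every relaxation (the $(x+y)^2\le 2(x^2+y^2)$ bound and the trace bound) moves the estimate in the weakening direction, which it does precisely because the target fraction is decreasing in its denominator. The factor $2$ in \eqref{prop_1_1}, absent from the tight one-dimensional bound \eqref{th_1_1}, is exactly the price paid by the $(x+y)^2\le 2(x^2+y^2)$ step, so I would not expect this multidimensional bound to be tight.
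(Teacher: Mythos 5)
Your proof is correct, but it takes a genuinely different route from the paper. The paper's proof never invokes Theorem~\ref{th_1}: after centering so that the means are $\pm\bm{a}/2$, it applies the Cauchy--Schwarz inequality coordinate-wise to get $a_k^2 \leq 2\TV(P,Q)\bigl((\Sigma_P)_{kk}+(\Sigma_Q)_{kk}+\tfrac{a_k^2}{2}\bigr)$ for each $k$, and then sums over $1\leq k\leq d$; this is a short, self-contained argument that does not depend on the (much harder) one-dimensional tight bound. Your route instead projects onto the unit vector $\bm{u}=\bm{a}/\sqrt{\bm{a}^T\bm{a}}$, uses the data-processing inequality for TV, applies Theorem~\ref{th_1}(a) to the projected laws, and then relaxes via $(x+y)^2\leq 2(x^2+y^2)$ and $\bm{u}^T\Sigma\bm{u}\leq\mathrm{tr}(\Sigma)$; every step is valid (the edge case $\bm{a}=\bm{0}$, the monotonicity of $t\mapsto c/(t+c)$, and the PSD trace bound are all handled correctly), and there is no circularity since the paper proves Theorem~\ref{th_1} independently of the proposition. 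What your approach buys is a strictly stronger intermediate inequality,
\begin{align}
\TV(P,Q)\;\geq\;\frac{\bm{a}^T\bm{a}}{\bigl(\sqrt{\bm{u}^T\Sigma_P\bm{u}}+\sqrt{\bm{u}^T\Sigma_Q\bm{u}}\bigr)^2+\bm{a}^T\bm{a}}, \nonumber
\end{align}
which depends only on the variances in the direction of the mean difference and dominates \eqref{prop_1_1}; it also explains structurally where the factor $2$ and the traces in \eqref{prop_1_1} represent slack. What the paper's approach buys is independence from Theorem~\ref{th_1}: the proposition stands on a few lines of elementary manipulation, whereas your argument inherits the full weight of the Lagrangian analysis in Section~\ref{section: proofs}.
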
 
\begin{proof}
Since the TV distance is invariant under transformation $\bm{x}\rightarrow \bm{x}-\frac{\bm{m}_P+\bm{m}_Q}2$, one can assume $(P,Q)\in \set{P}[\frac{\bm{a}}2, \Sigma_P; -\frac{\bm{a}}2, \Sigma_Q]$ without any loss of generality. 
By the Cauchy-Schwarz inequality, we have 
\begin{align}
\label{pr_1_1}
(\int |p-q||x_k| \dif\mu)^2=(\int \sqrt{|p-q|}\sqrt{|p-q|}|x_k| \dif\mu)^2 \leq  \int |p-q|\dif\mu \int (p+q)x_k^2  \dif\mu \nonumber \\
= 2\TV(P,Q)\Bigl(({\Sigma_P})_{kk}^2 + ({\Sigma_Q})_{kk}^2+\frac{a_k^2}2\Bigr).
\end{align}
By combining this inequality with $\int |p-q||x_k| \dif\mu\geq  |\int (p-q)x_k \dif\mu|=|a_k|$, it follows that 
\begin{align}
2\TV(P,Q) \Bigl(({\Sigma_P})_{kk}^2 + ({\Sigma_Q})_{kk}^2+\frac{a_k^2}2\Bigr) \geq a_k^2.
\end{align}
Taking the sum over $1\leq k \leq d$ for this inequality yields \eqref{prop_1_1}.
\end{proof}

\section{Proof of Theorem~\ref{th_1}}
\label{section: proofs}

\subsection{Proofs of Lemmas}
\label{sub_sec:minimum_condition}
Let $\set{P}_n$ be a set of pairs of probability measures defined on a common $n$-point set $\{x_1, x_2, \cdots, x_n\}$, where $\{x_i\}_{1\leq i\leq n}$ are arbitrary real numbers. Let $\set{P}_3^*\subset\set{P}_3$ be a set of pairs of probability measures $P$ and $Q$ such that $P=(1-p, p,0)$ and $Q=(1-p, 0,p)$ for $p \in [0,1]$. 
Before proving Theorem~\ref{th_1}, we prove the following lemmas.
\begin{lemma} \label{lem_pstar}
Let $\sigma_P$ and $\sigma_Q$ be positive, and let $m_P\neq m_Q$. Then, 
\begin{align}
\label{lem_pstar_1}
\min_{(P,Q)\in\set{P}_3^*\cap\set{P}[m_P, \sigma_P; m_Q, \sigma_Q]}\TV(P,Q)=\frac{a^2}{(\sigma_P+\sigma_Q)^2+a^2}.
\end{align}
\end{lemma}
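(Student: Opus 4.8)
The plan is to reduce this constrained minimization to a single scalar inequality and then invoke the triangle inequality. First I would evaluate the objective directly: for $(P,Q)\in\set{P}_3^*$ with $P=(1-p,p,0)$ and $Q=(1-p,0,p)$ on $\{x_1,x_2,x_3\}$, the two distributions agree at $x_1$ and differ only at $x_2$ and $x_3$, so $\TV(P,Q)=\tfrac12(p+p)=p$. Hence minimizing the TV distance over $\set{P}_3^*\cap\set{P}[m_P,\sigma_P;m_Q,\sigma_Q]$ is exactly the problem of minimizing $p$ subject to the four moment constraints.

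Next I would translate the moment constraints into constraints on the single point $x_1$. From $m_P=(1-p)x_1+px_2$ one gets $x_1-m_P=p(x_1-x_2)$ and $x_2-m_P=-(1-p)(x_1-x_2)$, whence $\sigma_P^2=p(1-p)(x_1-x_2)^2$ and therefore $(x_1-m_P)^2=\frac{p}{1-p}\sigma_P^2$. The identical computation for $Q$ yields $(x_1-m_Q)^2=\frac{p}{1-p}\sigma_Q^2$. Introducing $t:=\sqrt{p/(1-p)}\ge 0$ (well-defined and giving $p\in(0,1)$, since $\sigma_P,\sigma_Q>0$ force $p<1$), these read $|x_1-m_P|=\sigma_P\, t$ and $|x_1-m_Q|=\sigma_Q\, t$. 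Because $p=t^2/(1+t^2)$ is strictly increasing in $t$, minimizing $p$ is equivalent to minimizing $t$.

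The key step is then the observation that $(x_1-m_Q)-(x_1-m_P)=m_P-m_Q=a$. The triangle inequality gives $|a|\le|x_1-m_P|+|x_1-m_Q|=(\sigma_P+\sigma_Q)\,t$, so $t\ge |a|/(\sigma_P+\sigma_Q)$ and consequently $p\ge \frac{a^2}{(\sigma_P+\sigma_Q)^2+a^2}$, which is the asserted value. To see the bound is attained I would exhibit the equality case by choosing $x_1-m_P$ and $x_1-m_Q$ with opposite signs and $t=|a|/(\sigma_P+\sigma_Q)$, e.g.\ $x_1-m_P=-\operatorname{sign}(a)\,\sigma_P\, t$ and $x_1-m_Q=\operatorname{sign}(a)\,\sigma_Q\, t$; their difference is then exactly $a$, both moment equations hold, and the resulting $p$ equals the bound. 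Recovering $x_2,x_3$ from the mean equations, I would check that the three points are distinct (they are, since $\sigma_P,\sigma_Q>0$ and $a\neq 0$) and that $p\in(0,1)$, so the constructed pair lies in $\set{P}_3^*\cap\set{P}[m_P,\sigma_P;m_Q,\sigma_Q]$ and realizes the minimum.

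The only real obstacle is the bookkeeping in passing from the variance definitions to the clean relations $|x_1-m_P|=\sigma_P\,t$ and $|x_1-m_Q|=\sigma_Q\,t$; once these are in hand, the minimization collapses to a one-line triangle-inequality argument and the sign choice in the equality case is routine. I would be careful to note that the degenerate value $p=1$ is automatically excluded here, as positive variances force $p<1$.
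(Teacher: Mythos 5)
Your proposal is correct, and it reaches the result by a genuinely different (and cleaner) logical route than the paper. The paper solves the four moment constraints head-on: it derives $x_1=m_P\pm\sigma_P\sqrt{p/(1-p)}$, $x_1=m_Q\mp\sigma_Q\sqrt{p/(1-p)}$, and then enumerates the sign combinations, obtaining two families of feasible configurations whose TV values are $\frac{a^2}{(\sigma_P+\sigma_Q)^2+a^2}$ and $\frac{a^2}{(\sigma_P-\sigma_Q)^2+a^2}$; the stated minimum is the smaller of the two. You instead package the same constraint information as $|x_1-m_P|=\sigma_P t$ and $|x_1-m_Q|=\sigma_Q t$ with $t=\sqrt{p/(1-p)}$, and then a single application of the triangle inequality to $a=(x_1-m_Q)-(x_1-m_P)$ gives $t\geq |a|/(\sigma_P+\sigma_Q)$, hence the lower bound on $p=\TV(P,Q)$ uniformly over the feasible set; attainment follows by the opposite-sign choice, which reproduces exactly the paper's optimal branch. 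What your route buys: it proves ``min $=$ bound'' as inequality plus equality case, so you never need to argue that a case enumeration of solutions is exhaustive, and the degenerate endpoints $p\in\{0,1\}$ are excluded transparently by $\sigma_P,\sigma_Q>0$ and $m_P\neq m_Q$. What the paper's route buys: it catalogs \emph{all} critical configurations in $\set{P}_3^*$, including the strictly worse branch with value $\frac{a^2}{(\sigma_P-\sigma_Q)^2+a^2}$, which mirrors the style of case analysis used later in the proof of Lemma~\ref{minimum_condition}, though this extra information is not needed for the lemma itself. One small point worth making explicit in your write-up: $\TV(P,Q)=p$ relies on $x_1,x_2,x_3$ being distinct, and on the feasible set this is automatic (coincidence of $x_1$ with $x_2$ or $x_3$ would force $\sigma_P=0$ or $\sigma_Q=0$, and $x_2=x_3$ would force $P=Q$, hence $m_P=m_Q$); you verify distinctness only at the equality case, but the same one-line argument covers the lower-bound direction as well.
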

\begin{proof}

The moment constraints reduce to 
\begin{align}
\label{lem_1_1}
\begin{cases}
&(1-p) x_1 + px_2=m_P, \\
&(1-p) x_1^2 + px_2^2=m_P^2+\sigma_P^2,  \\ 
&(1-p) x_1 + px_3=m_Q,  \\ 
&(1-p) x_1^2 + px_3^2=m_Q^2+\sigma_Q^2,
\end{cases}
\end{align}
where $p:=P(x_2)=Q(x_3)$.
Subtracting the square of the first equation in \eqref{lem_1_1} from its second equation, we have
\begin{align}
\label{lem_1_2}
x_1-x_2=\pm\sigma_P\sqrt{\frac{1}{p(1-p)}}.
\end{align}
We first consider the first option in \eqref{lem_1_2}. Solving simultaneously this relation and the first equation in \eqref{lem_1_1}, we obtain 
\begin{align}
\label{lem_1_3}
x_1=m_P+\sigma_P\sqrt{\frac{p}{1-p}}, \\
\label{lem_1_4}
x_2=m_P-\sigma_P\sqrt{\frac{1-p}{p}}.
\end{align}
Similarly, from the third and the forth equation in \eqref{lem_1_1}, we obtain 
\begin{align}
\label{lem_1_5}
x_1=m_Q\mp\sigma_Q\sqrt{\frac{p}{1-p}}, \\
\label{lem_1_6}
x_3=m_Q\pm\sigma_Q\sqrt{\frac{1-p}{p}}.
\end{align}
By combining \eqref{lem_1_3} with the first option in \eqref{lem_1_5},  we have
\begin{align}
-\frac{a}{\sigma_P+\sigma_Q}=\sqrt{\frac{p}{1-p}}, \quad a<0.
\end{align}
Solving this equation for $p$, we have 
\begin{align}
\label{lem_1_7}
p=\frac{a^2}{(\sigma_P+\sigma_Q)^2+a^2}\in(0,1).
\end{align}
It can be verified that $x_1$, $x_2$, and $x_3$ are different from each other from \eqref{lem_1_3}-\eqref{lem_1_6}.
Since $\TV(P,Q)=p$, we have 
\begin{align}
\label{lem_1_8}
\TV(P,Q)=\frac{a^2}{(\sigma_P+\sigma_Q)^2+a^2}.
\end{align}
Similarly, by combining \eqref{lem_1_3} with the second option in \eqref{lem_1_5}, we have
\begin{align}
\label{lem_1_9}
p=\frac{a^2}{(\sigma_P-\sigma_Q)^2+a^2}\in(0,1], \quad \frac{a}{\sigma_P-\sigma_Q}<0.
\end{align}
Thus, we have 
\begin{align}
\label{lem_1_10}
\TV(P,Q)=\frac{a^2}{(\sigma_P-\sigma_Q)^2+a^2}.
\end{align}

We next consider the second option in \eqref{lem_1_2}. By replacing $\sigma_P\rightarrow -\sigma_P$ in \eqref{lem_1_8} and \eqref{lem_1_10}, we obtain the same results for $a>0$ and $ \frac{a}{\sigma_P-\sigma_Q}>0$, respectively. One can verify that these solutions satisfy \eqref{lem_1_1}. 
\end{proof}

\begin{lemma} \label{lem_p2}
Let $m_P\neq m_Q$. Then, a set $\set{P}_{2}\cap \set{P}[m_P, \sigma_P; m_Q, \sigma_Q]$ contains one component $(P,Q)$, and 
\begin{align}
\label{lem_p2_1}
\TV(P,Q)=\frac{a^2}{v},
\end{align}
where 
\begin{align}
\label{v}
v:= \sqrt{(\sigma_Q^2-\sigma_P^2)^2 + 2a^2(\sigma_P^2+\sigma_Q^2)+a^4}.
\end{align}
\end{lemma}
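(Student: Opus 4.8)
The plan is to exploit the fact that once both measures are forced onto a common two-point set, all of the geometry is determined by the support gap. Write $P=(1-p,p)$ and $Q=(1-q,q)$ on a common support $\{x_1,x_2\}$; because each measure sits on the same two atoms, the total variation distance collapses to $\TV(P,Q)=|p-q|$. The first step is to record the elementary identity that a two-point distribution on $\{x_1,x_2\}$ with mean $m$ has variance $(m-x_1)(x_2-m)$. Applied to the two variance constraints this gives $\sigma_P^2=(m_P-x_1)(x_2-m_P)$ and $\sigma_Q^2=(m_Q-x_1)(x_2-m_Q)$, replacing the four moment equations by two equations in the unknown support.

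Next I would introduce the symmetric functions $S:=x_1+x_2$ and $T:=x_1x_2$. Expanding the two variance identities renders each one \emph{linear} in $S$ and $T$, of the form $\sigma^2=mS-m^2-T$. Subtracting the $Q$-equation from the $P$-equation eliminates $T$, and since $a=m_P-m_Q\neq 0$ I can solve $S=m_P+m_Q+(\sigma_P^2-\sigma_Q^2)/a$ and then recover $T$ by back-substitution. This is precisely where the hypothesis $m_P\neq m_Q$ enters; without it the linear system for $S$ degenerates.

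The key quantity is the squared gap $\delta^2:=(x_2-x_1)^2=S^2-4T$, because the mean equations give $a=(p-q)(x_2-x_1)$, hence $\TV(P,Q)=|p-q|=|a|/|\delta|$. I would rewrite $S^2-4T=(S-2m_P)^2+4\sigma_P^2$ and substitute the value of $S$; using $S-2m_P=(\sigma_P^2-\sigma_Q^2-a^2)/a$ reduces everything to a single fraction. The main obstacle here is purely the algebraic verification (routine but the crux of the computation) that $(\sigma_P^2-\sigma_Q^2-a^2)^2+4a^2\sigma_P^2=(\sigma_P^2-\sigma_Q^2)^2+2a^2(\sigma_P^2+\sigma_Q^2)+a^4=v^2$, which yields $\delta^2=v^2/a^2$ and therefore $\TV(P,Q)=|a|/|\delta|=a^2/v$.

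Finally I would establish the ``one component'' claim, i.e. existence, uniqueness, and validity. Since $S$ and $T$ are uniquely determined, the unordered support $\{x_1,x_2\}$ is the unique root pair of $z^2-Sz+T=0$, after which $p$ and $q$ are pinned down by the mean equations, so the pair $(P,Q)$ is unique. Validity follows from $v^2\geq a^4>0$, which forces $\delta^2=v^2/a^2>0$ so that $x_1\neq x_2$ are real; and from $\sigma_P^2=p(1-p)\delta^2\geq 0$ together with $\sigma_Q^2=q(1-q)\delta^2\geq 0$, which force $p,q\in[0,1]$. Hence the intersection $\mathcal{P}_2\cap\set{P}[m_P,\sigma_P;m_Q,\sigma_Q]$ contains exactly one pair of genuine probability measures, with $\TV(P,Q)=a^2/v$ as claimed.
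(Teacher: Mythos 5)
Your proof is correct, and it takes a genuinely different route from the paper's. The paper parametrizes by the masses: it solves the $P$-moment equations for the support points, $x_1 = m_P + \sigma_P\sqrt{p/(1-p)}$, $x_2 = m_P - \sigma_P\sqrt{(1-p)/p}$, then uses the differenced constraints $(p-q)(x_2-x_1)=a$ and $a(x_1+x_2)=a(m_P+m_Q)+\sigma_P^2-\sigma_Q^2$ to obtain the scalar equation $\tfrac{2p-1}{\sqrt{p(1-p)}}=\tfrac{\sigma_P^2-\sigma_Q^2-a^2}{a\sigma_P}$, solves it explicitly for $p$, gets $q = p + a|a|/v$, and reads off $|p-q|=a^2/v$. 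You instead parametrize by the support through the symmetric functions $S=x_1+x_2$, $T=x_1x_2$, under which the variance constraints (given the means) become \emph{linear} in $(S,T)$; the squared gap $\delta^2=S^2-4T$ then yields $\TV(P,Q)=|a|/|\delta|=a^2/v$ without ever computing $p$ and $q$. Your route buys several things: no square roots and no branch/sign bookkeeping (the paper must track $\pm$ options and the $|a|/a$ factor); a transparent uniqueness argument (the linear system pins down $(S,T)$, hence the unordered support, hence $p,q$); a clean proof that $p,q\in[0,1]$ via $\sigma_P^2=p(1-p)\delta^2\geq 0$ and $\sigma_Q^2=q(1-q)\delta^2\geq 0$, a point the paper merely asserts by writing ``$\in[0,1]$'' after its formulas; and validity in the degenerate cases $\sigma_P=0$ or $\sigma_Q=0$, which the paper's manipulations (division by $a\sigma_P$ and by $\sqrt{p(1-p)}$) do not literally cover, even though Lemma~\ref{minimum_condition} later invokes the lemma with $\sigma_Q=0$ to get $\TV=a^2/(\sigma_P^2+a^2)$. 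The paper's approach, in exchange, produces explicit formulas for the masses $p$ and $q$, but these are not needed elsewhere; for the lemma as stated, your argument is the tighter one.
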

\begin{proof}

The moment constraints reduce to 
\begin{align}
\begin{cases}
\label{lem_2_1}
&(1-p) x_1 + px_2=m_P,  \\ 
&(1-p) x_1^2 + px_2^2=m_P^2+\sigma_P^2,   \\ 
&(1-q) x_1 + qx_2=m_Q,  \\ 
&(1-q) x_1^2 + qx_2^2=m_Q^2+\sigma_Q^2,
\end{cases}
\end{align}
where $p:=P(x_2)$ and $q:=Q(x_2)$.
In the similar way to the proof of Lemma~\ref{lem_pstar}, we have 
\begin{align}
\label{lem_2_2}
x_1=m_P+\sigma_P\sqrt{\frac{p}{1-p}}, \\
\label{lem_2_3}
x_2=m_P-\sigma_P\sqrt{\frac{1-p}{p}}.
\end{align}
It should be noted that an another solution $x_1=m_P-\sigma_P\sqrt{\frac{p}{1-p}}$, $x_2=m_P+\sigma_P\sqrt{\frac{1-p}{p}}$ corresponds to switching $(p,q, x_1)$ and $(1-p, 1-q, x_2)$, and it gives the same probability measure.
From these equations, we obtain 
\begin{align}
\label{lem_2_4}
x_1-x_2=\sigma_P\sqrt{\frac1{p(1-p)}}.
\end{align}
Subtracting the third equation in \eqref{lem_2_1} from its first equation, and subtracting the forth equation in \eqref{lem_2_1} from its second equation, we have
\begin{align}
\label{lem_2_5}
(p-q)(x_2-x_1)&=a, \\
\label{lem_2_6}
(p-q)(x_2-x_1)(x_1+x_2)&=a(x_1+x_2)=a(m_P+m_Q)+\sigma_P^2-\sigma_Q^2.
\end{align}
By substituting \eqref{lem_2_2} and \eqref{lem_2_3} into \eqref{lem_2_6}, we have
\begin{align}
\frac{2p-1}{\sqrt{p(1-p)}}=\frac{\sigma_P^2-\sigma_Q^2-a^2}{a\sigma_P}.
\end{align}
Solving this equation for $p$ gives
\begin{align}
\label{p}
p=\frac12 + \frac{|a|}{a}\frac{(\sigma_P^2-\sigma_Q^2-a^2)}{2v} \in [0,1].
\end{align} 
Substituting \eqref{lem_2_4} and \eqref{p} into \eqref{lem_2_5}, we obtain 
\begin{align}
\label{q}
q =p+ \frac{a|a|}{v}\in [0,1].
\end{align}
The TV distance between $P$ and $Q$ is given by
\begin{align}
\TV(P,Q)=|p-q|=\frac{a^2}{v}.
\end{align}
Hence, we obtain \eqref{lem_p2_1}.
\end{proof}

\begin{lemma} \label{minimum_condition}
\label{lem_minimum}
For $R>0$, let $\set{P}_{n, R}\subset \set{P}_{n}$ be a set of pairs of probability measures such that $|x_i|\leq R$ for all $i=1,2,\cdots, n$.
Let $(P,Q)\in \set{P}_{n, R}\cap \set{P}[m_P, \sigma_P; m_Q, \sigma_Q]$. If $m_P\neq m_Q$, the global minimum point $(P^*, Q^*, x^*)=\mathrm{argmin} \;\TV(P,Q)$ satisfies any one of the following conditions.
\begin{itemize}
\item
$\TV(P^*,Q^*)=\frac{a^2}{(\sigma_P+\sigma_Q)^2+a^2}$ and $\max_{1\leq i\leq n} |x^*_i| < R$.\\
When $\sigma_P>0$ and $\sigma_Q>0$, $(P^*,Q^*)\in\set{P}_3^*$ and otherwise, $(P^*,Q^*)\in\set{P}_2$.
\item
$\max_{1\leq i\leq n} |x^*_i| = R$.
\end{itemize}
\end{lemma}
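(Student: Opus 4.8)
The plan is to prove the dichotomy through the Karush--Kuhn--Tucker (KKT) conditions at a global minimizer, whose existence follows from compactness. First I would observe that $\set{P}_{n,R}\cap\set{P}[m_P,\sigma_P;m_Q,\sigma_Q]$ is compact: the positions lie in the box $[-R,R]^n$, the weight vectors in a product of simplices, and the mean and second-moment equations (together with $\sum_i p_i=\sum_i q_i=1$) are closed conditions; since $\TV$ is continuous, a global minimizer $(P^*,Q^*,x^*)$ exists. If $\max_{1\leq i\leq n}|x_i^*|=R$ the second alternative holds and nothing more is needed, so the remainder of the argument assumes the interior case $\max_{1\leq i\leq n}|x_i^*|<R$.

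Next I would form the Lagrangian, attaching multipliers $(a_0,a_1,a_2)$ to the normalization, mean, and second-moment constraints for $P$ and $(b_0,b_1,b_2)$ to those for $Q$, together with nonnegative multipliers for $p_i\geq0$ and $q_i\geq0$. Setting $g(x):=a_0+a_1x+a_2x^2$ and $h(x):=b_0+b_1x+b_2x^2$, stationarity in $p_i$ and $q_i$ (using the subgradient of $|p_i-q_i|$) gives, at each point with $p_i>0$, $g(x_i)=\tfrac12\,\mathrm{sign}(p_i-q_i)$, and at each point with $q_i>0$, $h(x_i)=\tfrac12\,\mathrm{sign}(q_i-p_i)$; in particular $g(x_i)=\tfrac12$ on $P$-excess points, $h(x_i)=\tfrac12$ on $Q$-excess points, and $g(x_i)=-h(x_i)\in[-\tfrac12,\tfrac12]$ on balanced points ($p_i=q_i>0$). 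Stationarity in the interior positions gives $p_i\,g'(x_i)+q_i\,h'(x_i)=0$ for each $i$.

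These relations bound the support by root counting. I classify the used points as $P$-excess ($p_i>q_i$), $Q$-excess ($q_i>p_i$), and balanced ($p_i=q_i>0$). Since $g$ is quadratic, $g(x)=\tfrac12$ has at most two roots, so there are at most two $P$-excess points; symmetrically $h(x)=\tfrac12$ gives at most two $Q$-excess points. At a balanced point the two weight conditions force $g(x_i)+h(x_i)=0$ while the position condition forces $(g+h)'(x_i)=0$, and as $g+h$ is quadratic it has at most one double root, so there is at most one balanced point. It then remains to reduce this admissible configuration to the asserted one: excluding two excess points of the same sign, showing the balanced point carries equal mass (the shared first coordinate of $\set{P}_3^*$), and degenerating to $\set{P}_2$ when a vanishing variance collapses one of the positive-weight requirements. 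Granting this, Lemma~\ref{lem_pstar} and Lemma~\ref{lem_p2} evaluate the two candidate structures as $\tfrac{a^2}{(\sigma_P+\sigma_Q)^2+a^2}$ and $\tfrac{a^2}{v}$ with $v$ as in \eqref{v}, and the identity $\bigl((\sigma_P+\sigma_Q)^2+a^2\bigr)^2-v^2=4\sigma_P\sigma_Q\bigl((\sigma_P+\sigma_Q)^2+a^2\bigr)\geq0$ shows the $\set{P}_3^*$ value is the smaller one when $\sigma_P,\sigma_Q>0$ and that the two agree precisely when $\sigma_P\sigma_Q=0$; this fixes both the minimum value and the dichotomy in the statement.

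The crux is the reduction in the previous paragraph, from the KKT-admissible support (at most two $P$-excess, two $Q$-excess, and one balanced point) to the exact two- or three-point structure. One must rule out configurations with two same-sign excess points, which are legitimate stationary points but not global minimizers, and handle the degenerate multiplier case $g+h\equiv0$, in which the position condition constrains nothing and a whole family of balanced configurations becomes admissible. In both situations I expect to finish by a direct comparison of $\TV$ values --- again through Lemmas~\ref{lem_pstar} and~\ref{lem_p2} --- verifying that no such configuration undercuts the $\set{P}_3^*$ (respectively $\set{P}_2$) value $\tfrac{a^2}{(\sigma_P+\sigma_Q)^2+a^2}$.
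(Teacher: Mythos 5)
Your outline follows the same route as the paper: compactness for existence, Lagrangian stationarity at an interior minimizer, multipliers assembling into two quadratics, root counting to bound the support, and evaluation of the surviving structures via Lemmas~\ref{lem_pstar} and~\ref{lem_p2} (your comparison identity $\bigl((\sigma_P+\sigma_Q)^2+a^2\bigr)^2-v^2=4\sigma_P\sigma_Q\bigl((\sigma_P+\sigma_Q)^2+a^2\bigr)$ is correct). But what you yourself label ``the crux'' and defer is precisely the body of the paper's proof, and the plan you give for it would not close. The stationary configurations surviving your root count include mixed three-point structures: two points where both measures put positive, unequal mass with the excess in the same direction (say $q_i>p_i>0$ at both), plus one point charged only by $P$. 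These are the paper's cases (C) and (D). They lie in neither $\set{P}_3^*$ nor $\set{P}_2$, so ``a direct comparison of $\TV$ values through Lemmas~\ref{lem_pstar} and~\ref{lem_p2}'' is not available --- neither lemma says anything about them. The paper has to solve the moment system for this structure from scratch (Appendix~B), obtaining $\TV(P^*,Q^*)=\frac{2a^2}{v+\sigma_P^2-\sigma_Q^2+a^2}$, and only then verify that this exceeds $\frac{a^2}{(\sigma_P+\sigma_Q)^2+a^2}$. Your proposal contains no substitute for this computation.

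Two further gaps. First, you invoke KKT at the minimizer with no constraint qualification; the paper proves separately (Lemma~\ref{lem_constraints}, Appendix~A) that the six moment-constraint gradients $\{\nabla_{p,q,x}g_k\}$ are linearly independent, and without this the minimizer need not be a stationary point of your Lagrangian. Second, your treatment of the degenerate case $g+h\equiv 0$ points in the wrong direction: you propose to survive it by comparing $\TV$ values, but when $g+h\equiv 0$ the position conditions constrain nothing, arbitrarily many balanced points become admissible, and again no existing lemma evaluates such configurations. The paper instead proves this case cannot occur (Lemma~\ref{lem_lagrange}): assuming $\phi+\psi\equiv 0$, the stationarity conditions force the quadratic $\phi$ (or $\psi$) to take vertex value $-1$ at one point and vertex value $+1$ at another, which is impossible. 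The same style of impossibility argument, applied to $\phi-1$ and $\psi-1$, is what rules out two same-direction excess points coexisting with a point of zero complementary weight, excludes $|N_{p,q}|=1$, and collapses your five-point admissible set to exactly $\set{P}_3^*$ versus $\set{P}_2$; without it the dichotomy in the statement is not established. Finally, the case $\sigma_Q=0$ requires its own Lagrangian (since $Q$ becomes a point mass and the index classification changes), which you only gesture at.
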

\begin{proof}

Let $\sigma_P$ or $\sigma_Q$ is positive since the case when $\sigma_P=\sigma_Q=0$ is trivial.
Consider the following minimization problem.
\begin{align}
&\mbox{minimize} \quad 2\TV(P,Q)=\sum_{i} |p_i-q_i| \nonumber \\[0.1cm]
\quad \mbox{subject to}\quad &g_k(p,x):= \sum_i p_i x_i^{k-1} -A_k=0, \nonumber \\[0.1cm] 
& g_{k+3}(q, x):= \sum_i q_i x_i^{k-1} -B_k=0, \quad \mbox{for} \hspace*{0.15cm}  k=1,2,3,\nonumber \\[0.1cm]
&0\leq p_i \leq 1, \quad 0\leq q_i \leq 1, \quad |x_i|\leq R, \quad \mbox{for} \hspace*{0.15cm}  1\leq i\leq n, \nonumber 
\end{align}
where $A:= (1, m_P, m_P^2 + \sigma_P^2)^\mathrm{T}$ and $B:= (1, m_Q,  m_Q^2+ \sigma_Q^2)^\mathrm{T}$. Since the feasible region is compact and the objective function is continuous, there exists a global minimum. In the following, we prove the case when $\max_{1\leq i\leq n} |x^*_i| < R$. 

We first consider the case when $\sigma_P > 0$ and $\sigma_Q > 0$, then we have $p_i ,q_i< 1$ for all $1\leq i\leq n$. For the global minimum point $(P^*, Q^*, x^*)$, we define sets of subscripts as 
\begin{align}
\begin{cases}
&N_{p,q}:= \{i \hspace*{0.15cm} |\hspace*{0.15cm}  p^*_i\neq q^*_i, \; p^*_i>0,  \; q^*_i > 0\}, \nonumber \\
&N_{p=q}:= \{i \hspace*{0.15cm} |\hspace*{0.15cm} p^*_i =q^*_i > 0\}, \nonumber \\
&N_{q=0}:= \{i \hspace*{0.15cm} |\hspace*{0.15cm} p^*_i>0,  \; q^*_i = 0\}, \nonumber \\
&N_{p=0}:= \{i \hspace*{0.15cm} |\hspace*{0.15cm} p^*_i=0,  \; q^*_i > 0\}, \nonumber 
\end{cases}
\end{align}
where $N_{p,q}\bigsqcup N_{p=q}\bigsqcup N_{q=0} \bigsqcup N_{p=0}=\{1,2, \cdots, n\}$. Since the case when $|N_{p,q}|=|N_{p=q}|=0$ must not be minimum, we suppose that $|N_{p,q}|$ or $|N_{p=q}|$ is positive in the following. For sufficiently small $\epsilon>0$, the point $(P^*, Q^*, x^*)$ is a global minimum in a region such that $p_i \in[p^*_i-\epsilon, p^*_i + \epsilon]$ for all $i\in N_{p,q}\bigsqcup N_{p=q} \bigsqcup N_{q=0}$, and $q_j \in[q^*_j-\epsilon, q^*_j + \epsilon]$ for all $j\in N_{p,q}\bigsqcup  N_{p=0}$.
For constraints $\{g_k\}_{1\leq k\leq6}$, we provide the following Lemma. The proof is shown in Appendix~\ref{pr_appendix}.
\begin{lemma}
\label{lem_constraints}
Let $(P,Q)\in \set{P}_{n}\cap \set{P}[m_P, \sigma_P; m_Q, \sigma_Q]$ for $m_P\neq m_Q$, $\sigma_P>0$, and $\sigma_Q>0$. Then, $\{\nabla_{p,q,x} g_k\}_{1\leq k\leq 6}$ are linearly independent.
\end{lemma}
From this lemma, the point $(P^*, Q^*, x^*)$ must be a stationary point of the following Lagrangian.
\begin{align}
\label{pr_1_5}
L(p, q, x , \lambda, \nu):=\sum_{i\in N_{p,q}} s_i (p_i-q_i)+\sum_{i\in N_{q=0}} p_i  + \sum_{i\in N_{p=0}}q_i-\sum_{i\in N_{p,q}\bigsqcup N_{q=0}} p_i\phi_{\lambda}(x_i) \\
-\sum_{i\in N_{p,q}\bigsqcup N_{p=0}} q_i\psi_{\nu}(x_i) - \sum_{i\in N_{p=q}}p_i(\phi_\lambda(x_i)+\psi_\nu(x_i))+\sum_{k=1}^3 \lambda_k A_k+\sum_{k=1}^3 \nu_{k}B_k,
\end{align}
where $s_i=+1$ or $-1$, $\phi_{\lambda}(x):= \sum_{k=1}^3 \lambda_k x^{k-1}$ and  $\psi_{\nu}(x):= \sum_{k=1}^3 \nu_{k} x^{k-1}$.
At a stationary point, all partial derivatives of the Lagrangian with respect to $p,q,x, \lambda, \nu$ should be zero. Letting $\phi(x):=\phi_{\lambda^*}(x)$ and $\psi(x):=\psi_{\nu^*}(x)$, we have
\begin{enumerate}[1)]
\item
$i\in N_{p,q}$ :
\begin{align}
\label{pr_1_6}
\phi(x^*_i)-s_i=\psi(x^*_i)+s_i=0,  \\[0.1cm]
\label{pr_1_7}
p^*_i\phi'(x^*_i)+q^*_i\psi'(x^*_i)=0.
\end{align}
From \eqref{pr_1_6}, we have 
\begin{align}
\label{pr_1_7_2}
\phi(x^*_i)+\psi(x^*_i)=0.  
\end{align}  
\item
$i\in N_{p=q}$ :
\begin{align}
\label{pr_1_8}
\phi(x)+\psi(x)=\alpha_i(x-x^*_i)^2.  
\end{align}
\item
$i\in N_{q=0}$ :
\begin{align}
\label{pr_1_9}
\phi(x)=\beta_i(x-x^*_i)^2+1.
\end{align}
\item
$i\in N_{p=0}$ :
\begin{align}
\label{pr_1_10}
\psi(x)=\gamma_i(x-x^*_i)^2+1,
\end{align}
\end{enumerate}
where $'$ denotes the derivative with respect to $x$, and $\alpha_i, \beta_i, \gamma_i$ are real constants. 
Equations \eqref{pr_1_8}-\eqref{pr_1_10} follow since $\phi(x)$ and $\psi(x)$ are at most quadratic functions with respect to $x$.
To simplify the proof, we provide the following lemma.
\begin{lemma}
\label{lem_lagrange}
Let $|N_{p,q}|$ or $|N_{p=q}|$ be positive. Then, all $\phi(x)-1, \psi(x)-1$, and $\phi(x)+\psi(x)$ are not identically zero.
\end{lemma}
We prove the case of $\phi(x)+\psi(x)$. 
If $\phi(x)+\psi(x)=0$, it follows that $\phi'(x)+\psi'(x)=0$.
When $|N_{p,q}|\geq 1$, from \eqref{pr_1_7} and using $p^*_i\neq q^*_i$, we have $\phi'(x^*_i)=\psi'(x^*_i)=0$ for all $i\in N_{p,q}$. 
From \eqref{pr_1_6}, we have $\phi(x)=\eta_i(x-x^*_i)^2+s_i$ and $\psi(x)=\tilde{\eta}_i(x-x^*_i)^2-s_i$, where $\eta_i$ and $\tilde{\eta}_i$ are constants.
By combining these equations, \eqref{pr_1_9} and \eqref{pr_1_10} with $\sum_{i=1}^n p^*_i=\sum_{i=1}^nq^*_i=1$, there exist $j$ and $k$ such that $\phi(x)=\eta_j(x-x^*_j)^2+1$ and $\psi(x)=\tilde{\eta}_k(x-x^*_k)^2+1$. Since $+s_i$ or $-s_i$ is equal to $-1$, we have $\phi(x)=\eta_i(x-x^*_i)^2-1=\eta_j(x-x^*_j)^2+1$ or $\psi(x)=\tilde{\eta}_i(x-x^*_i)^2-1 = \tilde{\eta}_k(x-x^*_k)^2+1$. Notice that this relation does not hold. Hence, $\phi(x)+\psi(x)$ must not be identically zero.
When $|N_{p,q}|=0$ and $|N_{p=q}|\geq 1$, the result follows since \eqref{pr_1_9} and \eqref{pr_1_10} do not satisfy $\phi(x)+\psi(x)=0$
 (it is noted that $|N_{q=0}|\geq 1$ and $|N_{p=0}|\geq 1$). The cases of $\phi(x)-1, \psi(x)-1$ are shown similarly, and these results complete the proof of Lemma~\ref{lem_lagrange}. 

From Lemma~\ref{lem_lagrange} and \eqref{pr_1_7_2}-\eqref{pr_1_10}, it follows that $|N_{p,q}|\leq 2$, $|N_{p=q}|\leq 1$, $|N_{q=0}|\leq 1$, $|N_{p=0}|\leq 1$, and either $|N_{p,q}|$ or $|N_{p=q}|$ is zero (recall that $\phi(x)$ and $\psi(x)$ are at most quadratic functions).
Furthermore, $|N_{q=0}|$ or $|N_{p=0}|$ must be zero when $|N_{p,q}|\geq 1$, since $\phi(x)-1$ or $\psi(x)-1$ is identically zero from \eqref{pr_1_6} if $|N_{q=0}|=|N_{p=0}|=1$. Therefore, we exclude the case when $|N_{p,q}|=1$ since $|N_{q=0}|$ and $|N_{p=0}|$ must be $1$ from $\sigma_P>0$ and $\sigma_Q>0$.
By summarizing these results, the minimum point satisfies one of the following conditions.

\begin{enumerate}[(A)]
\item 
$|N_{p,q}|=0, \;|N_{p=q}|=|N_{q=0}|=|N_{p=0}|=1$: \\
This case is equivalent to $(P^*,Q^*)\in \set{P}_3^*$. From Lemma~\ref{lem_pstar}, we have 
\begin{align}
\label{pr_1_11}
\TV(P^*,Q^*)=\frac{a^2}{(\sigma_P+\sigma_Q)^2+a^2}.
\end{align}
\item 
$|N_{p,q}|=2, \; |N_{p=q}|=|N_{q=0}|=|N_{p=0}|=0$: \\
This case is equivalent to $(P^*,Q^*)\in \set{P}_2$. From Lemma~\ref{lem_p2} and $(\sigma_P+\sigma_Q)^2+a^2> v$, we have 
\begin{align}
\label{pr_1_12}
\TV(P^*,Q^*)=\frac{a^2}{v}> \frac{a^2}{(\sigma_P+\sigma_Q)^2+a^2}.
\end{align}
\item 
$|N_{p,q}|=2, \; |N_{q=0}|=1, \;|N_{p=q}|=|N_{p=0}|=0$:\\
From Lemma~\ref{lem_lagrange} for $\phi(x)-1$, it follows that $s_i=-1$ for all $i\in N_{p,q}$. Then, we have
\begin{align}
\label{pr_1_13}
\TV(P^*,Q^*)=\frac{2a^2}{v+\sigma_P^2-\sigma_Q^2+a^2}> \frac{a^2}{(\sigma_P+\sigma_Q)^2+a^2}.
\end{align}
The derivation of the TV distance is shown in Appendix~\ref{deri_appendix}.
\item 
$|N_{p,q}|=2, \;|N_{p=0}|=1, \;|N_{p=q}|=|N_{q=0}|=0$:
\begin{align}
\label{pr_1_14}
\TV(P^*,Q^*)=\frac{2a^2}{v+\sigma_Q^2-\sigma_P^2+a^2}> \frac{a^2}{(\sigma_P+\sigma_Q)^2+a^2}.
\end{align}
This relation follows by switching $P^*$ and $Q^*$ in Case (C). 

\end{enumerate}
It should be noted that we exclude the case when $|N_{p=q}|=1$ and $|N_{p,q}|=|N_{q=0}|=|N_{p=0}|=0$ from the assumption $m_P \neq m_Q$. From \eqref{pr_1_11}-\eqref{pr_1_14} and Case (A), we complete the proof of Lemma~\ref{minimum_condition} for positive variances.

We next consider the case when $\sigma_P>0$ and $\sigma_Q=0$. Since $|N_{p,q}|$ is positive, we choose $N_{p,q}=\{1\}$, and we have $p_i<1$ for all $1\leq i\leq n$ from $\sigma_P>0$. 
The Lagrangian is given by
\begin{align}
\label{pr_1_15}
L(p, x , \lambda):= 1-p_1+\sum_{i\in N_{q=0}} p_i  -p_1\phi_{\lambda}(m_Q) - \sum_{i\in N_{q=0}} p_i\phi_{\lambda}(x_i) +\sum_{k=1}^3 \lambda_k A_k.
\end{align}
Since $|N_{p,q}|+|N_{q=0}|\geq 2$, $\{\nabla_{p,x} g_k\}_{1\leq k\leq 3}$ are linearly independent.
From $\phi(m_Q)+1=0$ and $\phi(x^*_i)-1=\phi'(x^*_i)=0$ for all $i\in N_{q=0}$, it follows that $|N_{q=0}|\leq 1$. Since this case is equivalent to $(P^*,Q^*)\in \set{P}_2$, we have $\TV(P^*,Q^*)=\frac{a^2}{\sigma_P^2+a^2}$ from Lemma~\ref{lem_p2}.
Switching $P^*$ and $Q^*$ complete the proof. 
\end{proof}
\subsection{Proof of Theorem~\ref{th_1}}

\begin{proof}

We first prove Item (a) and (b) in Theorem~\ref{th_1} for pairs of finite discrete probability measures. \\
Let $t^*:= \inf_{(P,Q)\in\set{P}_n \cap \set{P}[m_P, \sigma_P; m_Q, \sigma_Q]} \TV(P,Q)$, and suppose $t^*<\frac{a^2}{(\sigma_P+\sigma_Q)^2+a^2}$.
Under this assumption, the global minimum is on $\max_i |x^*_i|=R$ for arbitrary $R$ by Lemma~\ref{minimum_condition}. As $R\rightarrow\infty$, there exists a sequence of probability measures $\{P_k\}$ and $\{Q_k\}$ defined on $\{x_{1;k}, x_{2;k} \cdots, x_{n;k}\}$ such that
\begin{align}
\TV(P_\infty, Q_\infty)=t^*,
\end{align}
where $Z_\infty$ denotes $\lim_{k\rightarrow \infty} Z_j$ for variables $Z=\{P,Q, x_i\}$. Without any loss of generality, one can assume that $|x_{i;\infty}| < \infty$ for $1\leq i\leq n'$ and $|x_{i;\infty}| = \infty$ for $n'< i\leq n$. Let $\sum_{i>n'} p_{i;\infty}x^2_{i;\infty}=C^2$ and  $\sum_{i> n'} q_{i;\infty}x^2_{i;\infty}=D^2$, where $p_{i;k}:=P_k(x_{i;k})$ and $q_{i;k}:=Q_k(x_{i;k})$. By the variance constraints, we have $0\leq C^2\leq m_P^2+\sigma_P^2$ and $0\leq D^2\leq m_Q^2+\sigma_Q^2$. Since $p_{i;\infty}=O(x_{i;\infty}^{-2})$ and $q_{i;\infty}=O(x_{i;\infty}^{-2})$ for $i >n'$, it follows that

\begin{align}
\label{eq_moment_1}
\begin{cases}
&\sum_{i>n'} p_{i;\infty}=\sum_{i> n'} p_{i;\infty}x_{i;\infty}=0, \\[0.1cm]
&\sum_{i>n'} q_{i;\infty}=\sum_{i> n'} q_{i;\infty}x_{i;\infty}=0, \\[0.1cm] 
&\sum_{i> n'} |p_{i;\infty}-q_{i;\infty}|=0. 
\end{cases}
\end{align}
Let $P'$ and $Q'$ be probability measures defined on $\{x_{1;\infty}, x_{2;\infty}, \cdots, x_{n';\infty}\}$, and let $P'(x_{i;\infty}):=P(x_{i;\infty})$, $Q'(x_{i;\infty}):=Q(x_{i;\infty})$ for $1\leq i \leq n'$.
From \eqref{eq_moment_1}, it follows that 
\begin{align}
(P',Q')&\in \set{P}_{n',R'} \cap \set{P}[m_P, \sigma_P'; m_Q, \sigma_Q'], \nonumber \\
t^*&=\TV(P', Q'), \nonumber
\end{align}
where we set $R' > \max_{1\leq i\leq n'} |x_{i; \infty}|$, and ${\sigma_P'}^2:=\sigma_P^2-C^2$, ${\sigma_Q'}^2:=\sigma_Q^2-D^2$. If $t^*$ is not a global minimum in $\set{P}_{n',R'} \cap \set{P}[m_P, \sigma_P'; m_Q, \sigma_Q']$, there exists an another sequence in $\set{P}_n \cap \set{P}[m_P, \sigma_P;m_Q, \sigma_Q]$ such that $\sum_{i> n'} p_{i;\infty}x^2_{i;\infty}=C^2$ and $\sum_{i> n'} q_{i;\infty}x^2_{i;\infty}=D^2$, which gives a smaller TV distance than $t^*$. It contradicts that $t^*$ is infimum in $\set{P}_{n} \cap \set{P}[m_P, \sigma_P; m_Q, \sigma_Q]$. Hence, by Lemma~\ref{lem_minimum} for the case when $\max_{1\leq i\leq n'} |x_{i, \infty}|<R'$ and $\sigma_P'\leq \sigma_P$, $\sigma_Q'\leq \sigma_Q$, it follows that
\begin{align}
t^*=\frac{a^2}{(\sigma_P'+\sigma_Q')^2+a^2}\geq \frac{a^2}{(\sigma_P+\sigma_Q)^2+a^2}. \nonumber
\end{align}
Since this contradicts the assumption $t^*<\frac{a^2}{(\sigma_P+\sigma_Q)^2+a^2}$, we have $t^*=\frac{a^2}{(\sigma_P+\sigma_Q)^2+a^2}$. 
Therefore, we obtain \eqref{th_1_1}, and Item (b) follows from Lemma~\ref{lem_minimum}.

We next prove Item (a) for pairs of arbitrary probability measures $(P,Q)\in\set{P}[m_P, \sigma_P; m_Q, \sigma_Q]$.
For sufficiently small $\epsilon > 0$, there exists $M\in \naturals$ such that 
\begin{align}
\label{approximate1}
|\int_{x\in(-\infty, M) \cup [M, \infty)} px^{l} \mathrm{d}\mu|&<\epsilon, \quad |\int_{x\in(-\infty, M) \cup [M, \infty)} qx^{l} \mathrm{d}\mu|<\epsilon, \quad \mbox{for} \hspace*{0.15cm}  l=0,1,2, \nonumber \\
\int_{x\in(-\infty, M) \cup [M, \infty)} |p-q| \dif\mu &< \epsilon.
\end{align}
For a set $S:=\{ x \in[-M, M) |\;p(x)> q(x)\}$, let 
\begin{align}
\begin{cases}
&P_d(\frac{k}{2^M}+\frac1{2^{M+1}} ):=P\Bigl(S \cap \bigl[\frac{k}{2^M}, \frac{k+1}{2^M}\Bigr)\Bigr), \nonumber \\
&P_d(\frac{k}{2^M}+\frac1{2^{M+1}}+\delta ):=P\Bigl(\bigl([-M, M)\setminus S\bigr) \cap \Bigl[\frac{k}{2^M}, \frac{k+1}{2^M}\Bigr)\Bigr), \quad \mbox{for} \hspace*{0.15cm} -M2^M\leq k < M2^M, \nonumber 
\end{cases}
\end{align}
where $M2^{-M+1}<\epsilon$ and $\delta \in(0, \frac1{2^{M+1}})$. Similarly, we define $Q_d$. From these definitions and \eqref{approximate1}, it follows that $(P_d, Q_d)\in \set{P}_{M2^{M+2}}\cap \set{P}[m_P+O(\epsilon), \sigma_P+O(\epsilon); m_Q+O(\epsilon), \sigma_Q+O(\epsilon)]$ and $|\TV(P,Q)-\TV(P_d, Q_d)|<O(\epsilon)$.
By applying \eqref{th_1_1} for finite discrete probability measures $P_d$ and $Q_d$, it follows that $(P,Q)$ also satisfies \eqref{th_1_1} since $\epsilon$ is an arbitrarily small number. This completes the proof for Item (a).

We finally prove Item (c) in Theorem~\ref{th_1}. Since \eqref{th_1_3} is symmetric with respect to $P$ and $Q$, it is sufficient to prove for $\sigma_P>\sigma_Q$. Let $m:=m_P=m_Q$, and let
\begin{align}
P_k(x) := 
\begin{dcases}
\frac{1}{2}-\frac{1}{2k}, & \quad x= m \pm \sigma_Q, \nonumber \\
\frac{1}{2k},       & \quad x =  m \pm \sqrt{(\sigma_P^2-\sigma_Q^2)k+\sigma_Q^2} , \nonumber
\end{dcases}
\end{align}
and 
\begin{align}
Q_k(x) := 
\begin{dcases}
\frac12 , & \quad x = m \pm \sigma_Q,\nonumber \\
0, & \quad x =  m \pm \sqrt{(\sigma_P^2-\sigma_Q^2)k+\sigma_Q^2} \nonumber
\end{dcases}
\end{align}
for sufficiently large $k$.
As $k\rightarrow \infty$, we have $\TV(P_k, Q_k)=\frac 1k\rightarrow 0$. 
\end{proof}

\bibliography{reference_TV} 
\bibliographystyle{myplain} 
\appendices
\section{Proof of Lemma~\ref{lem_constraints}}
\label{pr_appendix}
\begin{proof}
Since the case when $|N_{p,q}|\geq 3$ is trivial, we prove for the case when $|N_{p,q}|\leq 2$.
Suppose that $\sum_{k=1}^6 \alpha_k \nabla_{p,q,x} g_k=0$.
\begin{enumerate}[(A)]
\item 
$|N_{p,q}|=2$: \\
Let $N_{p,q}=\{1,2\}$. If $|N_{q=0}|\geq 1$ and letting $N_{q=0}=\{3, \cdots\}$, for components $\{p_1, p_3, x_3\}$, we have
\begin{align}
(\nabla_{p,x} g_k):=(\nabla_{p,x} g_1, \nabla_{p,x} g_2, \cdots, \nabla_{p,x} g_6)= \nonumber
\begin{pmatrix}
   1 & x_1 & x_1^2 & 0 & 0 & 0 \\
   1 & x_3 & x_3^2 & 0 & 0 & 0 \\
   0 & p_3 & 2p_3x_3 & 0 & 0 & 0 
\end{pmatrix}
.
\end{align}
Therefore, we obtain $\alpha_k=0$ for $1\leq k \leq 3$. Considering components $\{q_1, q_2, x_1\}$, we have $\alpha_k=0$ for $4\leq k \leq 6$. Similarly, one can prove the case when $|N_{p=0}|\geq 1$.
We next consider $|N_{q=0}|=|N_{p=0}|=0$. For components $\{p_1, p_2, q_1, q_2, x_1, x_2\}$, we have 
\begin{align}
A:=(\nabla_{p,q,x} g_k)= \nonumber
\begin{pmatrix}
   1 & x_1 & x_1^2 & 0 & 0 & 0 \\
   1 & x_2 & x_2^2 & 0 & 0 & 0 \\
   0 & 0 & 0 & 1 & x_1 & x_1^2 \\
   0 & 0 & 0 & 1 & x_2 & x_2^2 \\
   0 & p_1 & 2p_1x_1 & 0 & q_1 & 2q_1x_1 \\
   0 & p_2 & 2p_2x_2 & 0 & q_2 & 2q_2x_2 \\
\end{pmatrix}
.
\end{align}
The determinant of the matrix $A$ is $-(p_1q_2-p_2q_1)(x_1-x_2)^4$. Since $P\neq Q$ from the assumption for means, we have $p_1q_2-p_2q_1\neq 0$.
Hence, the result follows.
\item 
$|N_{p,q}|=1$: \\
From the assumption for variances, we have $|N_{q=0}|+|N_{p=q}|\geq 1$ and $|N_{p=0}|+|N_{p=q}|\geq 1$. Since $\sum_i p_i=\sum_i q_i=1$, it must be $|N_{q=0}|\geq 1$ or $|N_{p=0}|\geq 1$. In the similar way to Case (A), we have $\alpha_k=0$ for $1\leq k \leq 6$.
\item 
$|N_{p,q}|=0$: \\
The case when $|N_{p=q}|=0$ is trivial.
From $P\neq Q$ and $\sum_i p_i=\sum_i q_i=1$, we have $|N_{q=0}|\geq 1$ and $|N_{p=0}|\geq 1$.
Letting $N_{q=0}=\{1, \cdots\}$, $N_{p=0}=\{2, \cdots\}$ and $N_{p=q}=\{3, \cdots\}$, we have
\begin{align}
B:=(\nabla_{p,q,x} g_k)= \nonumber
\begin{pmatrix}
   1 & x_1 & x_1^2 & 0 & 0 & 0 \\
   0 & p_1 & 2p_1x_1 & 0 & 0 & 0 \\  
   0 & 0 & 0 & 1 & x_2 & x_2^2 \\
   0 & 0 & 0 & 0 & q_2 & 2q_2x_2 \\
   1 & x_3 & x_3^2 & 1 & x_3 & x_3^2 \\
   0 & p_3 & 2p_3x_3 & 0 & p_3 & 2p_3x_3 \\
\end{pmatrix}
\end{align}
for components $\{p_1, x_1, q_2, x_2, p_3, x_3\}$.
Since $\det(B)=2p_1q_2p_3(x_1-x_2)(x_2-x_3)(x_3-x_1)\neq 0$, we have $\alpha_k=0$ for $1\leq k \leq 6$.
\end{enumerate}
\end{proof}

\section{Derivation of \eqref{pr_1_13}}
\label{deri_appendix}
Let $N_{p,q}=\{1,2\}$ and $N_{q=0}=\{3\}$. From \eqref{pr_1_6} for $s_i=-1$, we have $\phi(x)=\eta(x-x_1)(x-x_2)-1$ and $\psi(x)=\tilde{\eta}(x-x_1)(x-x_2)+1$.
By Lemma~\ref{lem_lagrange}, $\eta$ and $\tilde{\eta}$ are non-zero constants. Substituting these equations into \eqref{pr_1_7}, we have $\frac{p_1}{q_1}=\frac{p_2}{q_2}$. 
Thus, we define probability measures $P$ and $Q$ on $\{x_1,x_2, x_3\}$ as $P=((1-p)(1-q), (1-p)q, p)$ and $Q=(1-q, q,0)$. The moment constraints reduce to 
\begin{align}
\label{ap_3_1}
\begin{cases}
&(1-p) (1-q)x_1 + (1-p) qx_2 + px_3=m_P,   \\ 
&(1-p) (1-q)x_1^2 + (1-p) qx_2^2 + px_3^2=m_P^2+\sigma_P^2,   \\ 
&(1-q) x_1 + qx_2=m_Q,   \\ 
&(1-q)x_1^2 + qx_2^2=m_Q^2+\sigma_Q^2.
\end{cases}
\end{align}
Subtracting the result of multiplying the third equation by $1-p$ from the first equation in \eqref{ap_3_1}, and subtracting the result of multiplying the forth equation by $1-p$ from the second equation in \eqref{ap_3_1}, we have
\begin{align}
\label{ap_3_2}
&px_3=m_P-(1-p)m_Q=a+pm_Q, \\
\label{ap_3_3}
&px_3^2=m_P^2+\sigma_P^2-(1-p)(m_Q^2+\sigma_Q^2)=a(m_P+m_Q)+\sigma_P^2-\sigma_Q^2+p(m_Q^2+\sigma_Q^2).
\end{align}
Subtracting the square of \eqref{ap_3_2} from the result of multiplying \eqref{ap_3_3} by $p$, we have
\begin{align}
\sigma_Q^2p^2+(\sigma_P^2-\sigma_Q^2+a^2)p-a^2=0. \nonumber
\end{align}
Solving this equation for non-negative $p$, we have
\begin{align}
&p=\frac{-(\sigma_P^2-\sigma_Q^2+a^2)+v}{2\sigma_Q^2}=\frac{2a^2}{v+\sigma_P^2-\sigma_Q^2+a^2}. \nonumber
\end{align}
Since $\TV(P,Q)=p$, we obtain \eqref{pr_1_13}.

\end{document}